\documentclass[a4paper,12pt]{amsart}


\usepackage{amsthm,amssymb,amsmath,amscd}
\usepackage{amsthm}
\usepackage{pgf, tikz}
\usepackage{graphicx,subcaption}
\usepackage{verbatim}


\theoremstyle{definition}

\theoremstyle{plain}

\newtheorem*{lemman}
{Lemma}
\newtheorem*{theoremn}
{Theorem}

{Conjecture}

\theoremstyle{remark}

\newcommand{\pp}{P}

\hyphenation{e-li-mi-nate essen-tia-lly corres-pon-ding}

\begin{document}

\title[Ramsey number for 3-paths]
{On multicolor Ramsey number for 3-paths of length three}

\author{Tomasz \L{u}czak}

\address{Adam Mickiewicz University,
Faculty of Mathematics and Computer Science
ul.~Umultowska 87,
61-614 Pozna\'n, Poland}

\email{\tt tomasz@amu.edu.pl}

\author{Joanna Polcyn}

\address{Adam Mickiewicz University,
Faculty of Mathematics and Computer Science
ul.~Umultowska 87,
61-614 Pozna\'n, Poland}

\email{\tt joaska@amu.edu.pl}

\thanks{The first author partially 
supported by NCN grant 2012/06/A/ST1/00261. }

\keywords {Ramsey number, hypergraphs, paths}

\subjclass[2010]{Primary: 05D10, secondary: 05C38, 05C55, 05C65. }

\date{November 24, 2016}

\begin{abstract}
We show that if we color the hyperedges of the complete $3$-uniform complete graph on $2n+\sqrt{18n+1}+2$ vertices 
with $n$ colors, then one of the color classes contains a loose path of length three.
\end{abstract}

\maketitle

Let $\pp$ denote the $3$-uniform path of length three by which we mean the only 3-uniform connected hypergraph 
on seven vertices with the degree sequence $(2,2,1,1,1,1,1)$. By $R(\pp;n)$ we denote the 
multicolored Ramsey number for $\pp$ defined as the smallest number $N$ such that each 
coloring of the hyperedges of the complete 3-uniform hypergraph $K^{(3)}_N$ with $n$ colors
leads to a monochromatic copy of $\pp$. It is easy to check 
 that $R(\pp;n)\ge n+6$ (see, for instance, \cite{GR, J}),
and it is believed that in fact the equality holds, i.e. 
\begin{equation*}\label{eq1}
R(\pp;n)= n+6.
 \end{equation*}
  Gy\'{a}rf\'{a}s and Raeisi \cite{GR} proved, among many other results, that $R(\pp;2)=8$. Their theorem was extended by Omidi and Shahsiah~\cite{Omidi} to loose paths of arbitrary lengths, but still only for the case of two colors. 
On the other hand, in a series of papers \cite{J, JPR, PR, P} it was verified that
 $R(\pp;n)=n+6$ for all $3\le n\le 10$.
  
Note that from the fact that for $N\ge 8$ the largest 
$\pp$-free $3$-uniform hypergraph on $N$ vertices contains at most $\binom {N-1}2$ edges
(see \cite{JPRt}), 
it follows that for $n\ge 7$ we have
\begin{equation*}\label{eq2}
R(\pp;n)\le 3n +1.
 \end{equation*} 
Our main goal is to  improve the above  bound.

\begin{theoremn}\label{thm:main}
$R(\pp;n)\le 2n+\sqrt{18n+1}+2$.
\end{theoremn}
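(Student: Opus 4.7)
The strategy is induction on the number of colors $n$, with base cases supplied by the known values $R(\pp;n)=n+6$ for $n\le 10$ from~\cite{GR,J,JPR,PR,P}. For the inductive step, assume $R(\pp;n-1)\le 2(n-1)+\sqrt{18(n-1)+1}+2$ and consider an $n$-coloring of $K_N^{(3)}$ on $N=2n+\sqrt{18n+1}+2$ vertices. Suppose, for contradiction, that no color class contains $\pp$.

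Each color class $H_i$ is $\pp$-free, so $|H_i|\le\binom{N-1}{2}$ by~\cite{JPRt}. The crude bound $\binom{N}{3}\le n\binom{N-1}{2}$ only yields $N\le 3n+1$; to improve this to $2n+O(\sqrt{n})$, I would rely on a structural/stability refinement of the Tur\'an-type result. The natural target is a lemma saying that for each color $i$ there is a small ``cover'' $U_i$ of vertices such that (almost) every edge of $H_i$ meets $U_i$, together with a quantitative trade-off between $|H_i|$ and $|U_i|$. The underlying observation is that if $H_i$ contains an edge $e$ avoiding a potential center $v$, then to avoid a $\pp$ one must forbid the many star edges at $v$ that are disjoint from $e$, severely constraining the ``non-star'' part of $H_i$.

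With such a lemma available, let $U=\bigcup_{i=1}^{n}U_i$ and $k=|U|$. Every edge of $K_N^{(3)}$ contained in $V\setminus U$ is of some color $i$ and must meet $U_i\subseteq U$, which is impossible unless it is ``exceptional'' (one of the few edges of $H_i$ not covered by $U_i$). Bounding the total number of exceptions by the structural lemma, and using $k\le\sum_i|U_i|$ together with $\sum_i|H_i|=\binom{N}{3}$, should yield a quadratic inequality of the form $(N-2n-2)^2\le 18n+1$, equivalent to the claimed bound on $N$. Where this fails --- i.e., when a color has no small cover and only a diffuse structure --- one expects to apply the inductive hypothesis after pruning a vertex contained in many near-star color classes, reducing to $n-1$ colors on $N-1$ vertices.

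The main obstacle is the structural lemma itself: $\pp$-free 3-graphs admit several qualitatively different near-extremal shapes (a single star with $\binom{N-1}{2}$ edges; a disjoint union of $k$ stars, with up to $\sim N^{2}/(2k)$ edges; small cliques such as $K^{(3)}_6$), and capturing the relationship between $|H_i|$ and the cover size $|U_i|$ uniformly across these regimes is delicate. It is here that the precise constant $18$ and the $\sqrt{n}$ savings should originate, as the product of the extremal count for $\pp$-free hypergraphs of bounded cover number and the ``balance'' between $k$ (the total size of the covers) and $N-k$ (the vertices left outside).
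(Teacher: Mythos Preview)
Your proposal is not a proof but an outline that hinges on an unstated structural lemma (a cover-type result for $\pp$-free $3$-graphs with a precise trade-off between $|H_i|$ and $|U_i|$), and you explicitly flag this as ``the main obstacle''. As you note yourself, $\pp$-free hypergraphs can look like a single star, a disjoint union of stars, or a $K^{(3)}_6$, and there is no reason to expect a uniform cover bound sharp enough to produce exactly $(N-2n-2)^2\le 18n+1$. The inductive fallback you sketch (``prune a vertex and drop to $n-1$ colors'') also does not match the target inequality: removing one vertex from $N=2n+\sqrt{18n+1}+2$ does not bring you to the inductive threshold $2(n-1)+\sqrt{18(n-1)+1}+2$, since the gain is $1$ while you need roughly $2$. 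So both the main line and the fallback have genuine gaps.

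The paper's argument is entirely different and avoids any structural/cover lemma. It first proves a cleaning lemma: from any $\pp$-free $3$-graph on $N$ vertices one can delete fewer than $3N$ hyperedges so that the remainder contains neither the loose triangle $C$ nor a specific $5$-vertex configuration $F$. Applying this to every color class, one marks fewer than $r=3Nn$ hyperedges as ``blank''. Next, one builds an auxiliary $n$-coloring of \emph{pairs}: color $vw$ with $s$ if at least three non-blank hyperedges of color $s$ contain $vw$. Each uncolored pair lies in at least $N-2n-2$ blank hyperedges, so fewer than $3r/(N-2n-2)$ pairs are uncolored. Pigeonhole then gives a color $t$ whose pair-graph $G_t$ has more than $N$ edges once $(N-2n-1)(N-2n-2)>18n$, i.e.\ once $N\ge 2n+\sqrt{18n+1}+2$; hence $G_t$ contains a $2$-uniform path $v_1v_2v_3v_4$. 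A short case analysis on the three hyperedges through each of $v_1v_2$, $v_2v_3$, $v_3v_4$ shows that color $t$ then contains $\pp$, or $C$, or $F$, contradicting the cleaning. The constant $18$ arises transparently from $3\cdot 3N n$ (the blank budget, times $3$ for the triple-counting of pairs in hyperedges) divided by $N-2n-2$, not from any extremal cover calculation.
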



Let $C$ denote the (loose) $3$-uniform $3$-cycle, i.e. the only $3$-uniform linear hypergraph with six  vertices and three hyperedges. Furthermore, let  $F$ be 
the $3$-uniform hypergraph on vertices $v_1,v_2,v_3,v_4,v_5$ such that the first four of these 
vertices span in $F$ the clique, and $v_5$ is contained in the following three hyperedges:
$v_1v_2v_5$, $v_2v_3v_5$, and $v_3v_4v_5$. The following fact will be crucial for our argument.

 \begin{lemman}\label{l:main}
Let $H$ be a $3$-uniform $\pp$-free hypergraph on $n\ge 5$ vertices. Then we can delete 
from $H$ fewer than $3n$ hyperedges in such a way that the resulting hypergraph contains no copies of $C$ and $F$.
\end{lemman}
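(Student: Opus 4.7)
The plan is to work vertex-by-vertex, using the $\pp$-freeness of $H$ to tightly control the local structure at each vertex. Fix a vertex $v\in V(H)$ and consider its \emph{link graph} $L_v$ on $V(H)\setminus\{v\}$, whose edges are the pairs $\{a,b\}$ with $\{v,a,b\}\in E(H)$.

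The first observation is that $\pp$-freeness strongly restricts how $L_v$ interacts with the rest of $H$. For example, if $L_v$ contains two disjoint edges $\{a_1,b_1\}$ and $\{a_2,b_2\}$, then any $f\in E(H)$ with $v\notin f$, $f\cap\{a_2,b_2\}=\emptyset$ and $|f\cap\{a_1,b_1\}|=1$ would complete, together with $\{v,a_1,b_1\}$ and $\{v,a_2,b_2\}$, a copy of the forbidden $\pp$. Symmetric statements hold with the roles of the two pairs swapped, and analogous statements hold whenever $L_v$ contains other small configurations.

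Using these link constraints I would analyze the role $v$ can play in copies of $C$ and $F$. In every copy of $C$, each of the three ``hub'' vertices (of degree $2$ in $C$) contributes two disjoint edges to its own link, and in every copy of $F$, the vertices of degree $\ge 3$ contribute rich local structure (a path of length two in the link of the apex $v_5$, and, in the link of each of $v_1,\dots,v_4$, a triangle arising from $K_4^{(3)}$ together with an extra edge). The aim is to show that for each $v$ the copies of $C$ and $F$ in which $v$ plays such a high-degree role can all be hit by a small vertex-independent number of edges at $v$, producing deletion sets $D_v$ with $\sum_v |D_v| < 3n$. Processing the vertices in order and deleting $D_v$ at each step, every copy of $C$ (which has three hubs) and every copy of $F$ (which has four vertices of degree $\ge 3$) is destroyed when its first high-degree vertex is processed. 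The base case $n=5$ is immediate because $|E(H)|\le\binom{5}{3}=10<15$.

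The main obstacle is the structural claim that all copies of $C$ and $F$ through a fixed vertex $v$ can be covered by so few edges at $v$. The $F$-case is especially delicate because of the $K_4^{(3)}$ sub-structure: if many copies of $F$ overlap at $v$ then one must show, via repeated use of extension arguments like the one sketched above, that their $4$-cliques share enough structure for a small hitting set to suffice. Unifying this with the $C$-analysis (linear on six vertices, rather than five) in a single vertex-local count is the technical heart of the lemma.
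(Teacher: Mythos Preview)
Your proposal is a plan, not a proof: the step you yourself flag as the ``main obstacle'' --- that for every vertex $v$ there is a bounded-size set $D_v$ of edges through $v$ hitting all copies of $C$ and $F$ in which $v$ has high degree, with $\sum_v |D_v|<3n$ --- is neither stated precisely nor proved. Nothing in the sketch indicates why such a uniform per-vertex bound should hold; the link-graph observations you list (e.g.\ that two disjoint link edges forbid certain extensions) are correct but far from sufficient to control \emph{all} copies of $C$ and $F$ through $v$ simultaneously. Consider already $H=K^{(3)}_6$: it is $\pp$-free, every vertex has the same link, and one genuinely needs to remove ten hyperedges to kill all copies of $C$ and $F$; a vertex-local scheme must explain how these ten deletions are distributed among the six $D_v$'s, and nothing in your outline does that. (Incidentally, in $F$ the vertex $v_5$ has degree $3$, so all five vertices have degree $\ge 3$, not four.)

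The paper avoids this difficulty entirely by arguing component-by-component rather than vertex-by-vertex. First, for any component containing a copy of $C$ it invokes a Tur\'an-type result of Jackowska, Polcyn and Ruci\'nski: a $\pp$-free component on $n_i\ge 7$ vertices containing $C$ has at most $3n_i-8$ hyperedges, so one may simply delete \emph{all} of its edges; the remaining case $n_i=6$ is handled by reducing $K^{(3)}_6$ to a star (ten deletions). Second, in a $\pp$-free and $C$-free hypergraph every copy of $F$ is shown to be an isolated component on five vertices: any hyperedge meeting $F$ in exactly one vertex creates a $\pp$, and any hyperedge meeting it in exactly two vertices creates a $C$. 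Hence four deletions suffice per such component. Summing over components gives fewer than $3n$ deletions. The contrast with your approach is that the paper never tries to hit $C$'s and $F$'s locally; it exploits that $\pp$-freeness makes the \emph{global} edge count of the offending components small enough to delete wholesale.
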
     

\begin{proof} Let us first consider components containing $C$. Jackowska, Polcyn and Ruci\'nski \cite{JPR} showed
that each such component of $H$ on $n_i$ vertices has at most $3n_i-8< 3n_i$ hyperedges, 
provided $n_i\ge 7$. Furthermore, from the complete 3-hypergraph on $n_i=6$ vertices it is
enough to delete $10<3n_i$ edges to get a star, which clearly contains no copies of $C$ and $F$. Hence, to get rid of all copies of $C$ (and $F$ in components containing $C$) it is enough to remove at most 
$3n'$ of hyperedges from components containing them, where $n'$ denotes the number of vertices 
in these components combined. Now let us consider components containing $F$ but not $C$. It is easy to check 
by a direct inspection that any hyperedge $e$ which shares with $F$ just one vertex
would create a copy of $P$. Moreover, any hyperedge $e'$ which shares with  $F$ two vertices 
would create a copy of $C$. Consequently, each copy of $F$ in a $\pp$-free, $C$-free
3-uniform hypergraph $H'$ is a component. Note that each such component has  at most $\binom 53=10$ edges
and we can destroy $F$ by removing from it $4< 3\cdot 5$ hyperedges. Thus, one can 
delete from $H$ fewer than $3n$ hyperedges to destroy all copies of $C$ and $F$.
\end{proof}

\begin{proof}[Proof of Theorem] Consider a coloring of the edges of the complete 3-uniform hypergraph on 
$2n+m$ vertices with $n$ colors. Assume that no color class contains a copy of $P$. Then, 
by Lemma, we can mark as `blank' fewer than $r=3(2n+m)n$ hyperedges of the graph in such a way that when we ignore blank hyperedges
the coloring contains no monochromatic copies of $C$ and $F$. 

Let us color a pair of vertices $vw$  with a color $s$,  $s=1,2,\dots,n$, if there exist at least  three hyperedges of color $s$,  $s=1,2,\dots,n$, which contain this pair. If there are many such colors we choose any of them; if there are none we leave $vw$ uncolored. Note that every 
uncolored pair must be contained in at least $m-2$ blank hyperedges. Consequently, 
fewer than $3r/(m-2)$ pairs remains uncolored. But then there exists a color $t$, $t=1,2,\dots,n$, such that there are more than 
$$\bigg[\binom{2n+m}{2}-\frac{3r}{m-2}\bigg]\bigg/n=2n+m+(2n+m)\Big[\frac{m-1}{2n}-\frac{9}{m-2}\Big] $$
pairs colored with $t$.  If $m\ge \sqrt{18n+1}+2$, then  
$$\frac{m-1}{2n}-\frac{9}{m-2}> 0\,, $$ 
and the graph $G_t$ spanned by these pairs has more edges than vertices. 
But it means that $G_t$ contains a path of length 3, i.e. there are vertices  
$v_1,v_2, v_3,v_4$ and a color $t$ such that each of the three pairs $v_1v_2$, $v_2v_3$,
$v_3v_4$ is contained in at least three different hyperedges colored with $t$. We shall show 
that it leads to a contradiction. 

Indeed, let $H_t$ be a hypergraph spanned by hyperedges colored with the $t$th color. 
Observe first that since $v_2v_3$ is contained in three different hyperedges of $H_t$ there must be 
one which is different from $v_1v_2v_3$ and $v_2v_3v_4$; let us call it $v_2v_3v_5$ where 
$v_5\neq v_1,v_2,v_3,v_4$. Furthermore, $v_1v_2$ must be contained in a hyperedge $v_1v_2w$ of $H_t$ 
where  $w\neq v_3,v_5$, while $v_3v_4$ is contained in  some $v_3v_4u$, 
where $u\neq v_2,v_5$. Note now
that if $w\neq v_4$ and $u\neq w,v_1$, then $H_t$ contains a copy of $P$ which contradicts the fact that it is $P$-free. The case  $w=u\neq v_1,v_4$ would lead to a cycle $C$, as well as 
the cases $w=v_4$, $u\neq v_1$, and $u=v_1$, $w\neq v_4$. Finally, if the only  possible choices 
for $w$ and $u$ are $w=v_4$ and $u=v_1$, then the vertices $v_1,\dots, v_5$ span a copy of $F$, 
so we arrive at the contradiction again.     
\end{proof}

{\sl Remark}
The bound $3n$	given in Lemma is rather crude. Using the fact that each component of $H$ on $n_i$ vertices containing $C$ and two disjoint edges has at most $n_i+5$ hyperedges, provided $n_i\ge 7$ (see \cite{P}) and by careful analysis of 3-uniform intersecting families (see \cite{EKR,HK,HM,KM,PRIF}) one can improve it by a constant factor and, 
consequently, improve by a constant factor the second order term in the estimate for 
$R(P;n)$. 

{\bf Acknowledgement.} We wish to thank Andrzej Ruci\'nski for stimulating conversations.

\bibliographystyle{plain}

\end{document}